\let\rscr=\mathscr
\let\mathscr=\relax
\let\mcal=\mathcal
\let\escr=\mathcal
\let\mathcal=\relax
\DeclareFontFamily{U}{wncy}{}
\DeclareFontShape{U}{wncy}{m}{n}{<->wncyr10}{}
\DeclareFontShape{U}{wncy}{m}{it}{<->wncyi10}{}
\DeclareSymbolFont{UWCyr}{U}{wncy}{m}{it}
\DeclareMathSymbol{\hol}{\mathalpha}{UWCyr}{"10}
\newtheorem{theorem}{Theorem}[section]
\newtheorem{corollary}[theorem]{Corollary}
\renewcommand{\a}{\alpha}
\newcommand{\bD}{\pbar{D}}
\newcommand{\bg}{\pbar{\gamma}}
\newcommand{\dc}{\dot{c}}
\newcommand{\dg}{\dot{\g}}
\newcommand{\del}[1]{\nabla_{\!#1}}
\newcommand{\ds}{\oplus} 
\newcommand{\eF}{\escr{F}}
\newcommand{\g}{\gamma}
\newcommand{\G}{\Gamma}
\newcommand{\iso}{\cong}
\newcommand{\lbs}{\raise .04ex\hbox{\rm\pounds}}
\newcommand{\lsp}{[\kern-0.15em[} 
\newcommand{\norm}[1]{\lVert#1\rVert}
\newcommand{\pbar}[1]{\begingroup\let\hidewidth\relax\accentset{\hrulefill}{#1}\endgroup}
\newcommand{\pllt}{\mcal{P}}
\newcommand{\rc}{$\check{\text{r}}$} 
\newcommand{\rev}[1]{\begingroup\let\hidewidth\relax\accentset{\leftarrow}{#1}\endgroup}
\newcommand{\rsp}{]\kern-0.15em]} 
\newcommand{\surj}{\rightarrow\kern-.82em\rightarrow}
\newcommand{\V}{\rscr{V}}
\newcommand{\ve}{\varepsilon}
\newcommand{\R}{\mathbb{R}}
\renewcommand{\H}{\rscr{H}}
\renewcommand{\S}{\lower .2ex\hbox{$\escr S$}}
\newcommand{\ad}[1]{\mathop{\operator@font Ad}\nolimits_{#1}}
\newcommand{\Aut}{\mathop{\operator@font Aut}\nolimits}
\newcommand{\con}{\mathop{\operator@font con}\nolimits}
\newcommand{\diag}{\mathop{\operator@font diag}\nolimits}
\newcommand{\diff}{\mathop{\operator@font Diff}\nolimits}
\newcommand{\dom}{\mathop{\operator@font dom}\nolimits}
\newcommand{\econ}{\mathop{\operator@font EConn}\nolimits}
\newcommand{\End}{\mathop{\operator@font End}\nolimits}
\newcommand{\Hom}{\mathop{\operator@font Hom}\nolimits}
\newcommand{\im}{\mathop{\operator@font im}\nolimits}
\newcommand{\ld}[1]{\mathop{\operator@font\lbs}\nolimits_{#1}\!}
\newcommand{\op}{\mathop{\operator@font Op_2}\nolimits}
\newcommand{\opa}{\mathop{\operator@font Op_2^{alt}}\nolimits}
\newcommand{\qsp}{\mathop{\operator@font QSpray}\nolimits}
\newcommand{\ric}{\mathop{\operator@font Ric}\nolimits}
\newcommand{\sode}{\mathop{\operator@font DE_2}\nolimits}
\newcommand{\tr}{\mathop{\operator@font tr}\nolimits}
\font\heads = cmbx12
\preprint{PR2}
\title{Horizontal Path Lifting for General Connections}
\author{Phillip E. Parker \hspace{1 em} Justin M. Ryan}
\address{Mathematics Department\\
   Wichita State University\\
   Wichita KS 67260-0033\\
   USA\\
   phil@math.wichita.edu\\
   ryan@math.wichita.edu}
\date{21 October 2013} 
\abstract{
We characterize the existence of horizontal path lifts for general
connections on arbitrary fiber bundles with a new property that
also gives fresh insight into linear and $G$-connections.
}
\begin{document}
\maketitle

\setcounter{page}{0}\thispagestyle{empty}\strut\vfill\eject

\section{\heads Introduction and Preliminaries}

A \emph{general connection} on a smooth fiber bundle $\pi :E \surj M$ is a
subbundle $\H$ of the tangent bundle $\pi_T:  TE \surj E$ that is
complementary to the vertical bundle $\V = \ker\pi_* = \ker T\pi$,
so that
$$
TE = \H\ds\V .
$$

Connections were originally studied on tangent bundles,
and were so-named because they connected distant
tangent spaces by means of parallel transport \cite{WY}.  Parallel
transport is defined using horizontal lifts
of paths in $M$, so what we want is
a theorem like the following for any general connection.

\medskip
\noindent{\it
Let $\g : I\to M$ be a path with $\g(0)=p$ and $\g(1)=q$.  For every $v\in
E_p$ there exists a unique horizontal lift $\bg$ such that $\bg(0)=v$
and $\bg(1)\in E_q$.}

\medskip
\noindent
Ehresmann recognized that this horizontal path lifting property
is nontrivial by including it in his definition of a
connection \cite{E}.  For this reason, general connections with this property are sometimes
called \emph{Ehresmann connections}. He further showed that this property holds if $E$ has
compact fibers,
or if $\H$ is a $G$-connection on a $G$-bundle.  Kol\'{a}\rc , Michor, and
Slov\'{a}k \cite[p.\,81]{KMS} gave a sufficient condition for a general connection
to have this property. 

The usual proof for linear connections starts this way.  Consider the
pullback bundle $\g^* E$ over $I$, let $D = \tfrac{d}{dt}$ denote the
standard vector field on $I$, let $\bD$ denote its horizonal lift to $\g^*
E$, and let $c$ denote the unique integral curve of $\bD$ with $c(0)=v$.
Then $(\g^*\pi)c$ is an integral curve of $D$ with $(\g^*\pi)c(t)=t$ on
$I$.

Unfortunately, $(\g^*\pi)c$ need \emph{not} extend over all of $I$ in
general. Figure \ref{hlift} shows a simple 1-dimensional example of this as 
a connection on $TM$.
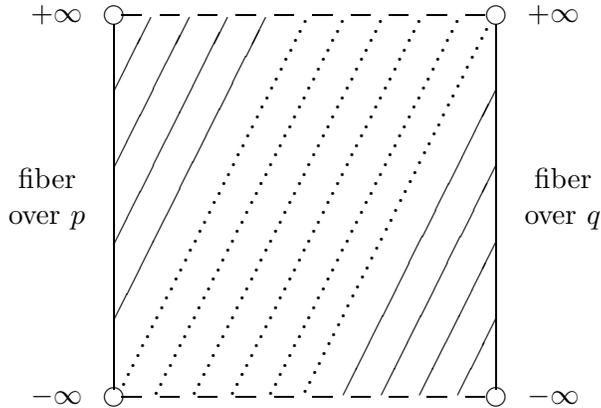
\begin{figure}[htb]
\begin{center}
\setlength{\unitlength}{.1in}
\begin{picture}(20,20)
\put(0,0){\circle{1}}
\put(0,20){\circle{1}}
\put(20,0){\circle{1}}
\put(20,20){\circle{1}}
\put(0,.4){\line(0,1){19.1}}
\put(20,.4){\line(0,1){19.1}}
\multiput(.4,0)(2,0){10}{\line(1,0){1}}
\multiput(.4,20)(2,0){10}{\line(1,0){1}}
\put(-3,0){\makebox(0,0){$-\infty$}}
\put(-3,20){\makebox(0,0){$+\infty$}}
\put(23,0){\makebox(0,0){$-\infty$}}
\put(23,20){\makebox(0,0){$+\infty$}}
\put(-6,10){\parbox{.5in}{\centering{fiber}\\ over $p$}}
\put(21,10){\parbox{.5in}{\centering{fiber}\\ over $q$}}
\put(0,16){\line(1,2){1.95}}
\put(0,12){\line(1,2){3.95}}
\put(0,8){\line(1,2){5.95}}
\put(0,4){\line(1,2){7.95}}
\multiput(.3,.6)(.25,.5){39}{.}
\multiput(2,.1)(.25,.5){40}{.}
\multiput(4,.1)(.25,.5){40}{.}
\multiput(6,.1)(.25,.5){40}{.}
\multiput(8,.1)(.25,.5){40}{.}
\multiput(9.8,.2)(.25,.5){39}{.}
\put(12,.1){\line(1,2){8}}
\put(14,.1){\line(1,2){6}}
\put(16,.1){\line(1,2){4}}
\put(18,.1){\line(1,2){2}}
\end{picture}
\bigskip
\caption{\small Horizontal lifts of a path $c$ from $p$ to $q$ in $M$.
Infinity of the fibers has been brought into the finite plane by a
compression such as $y\mapsto\tanh y$.  The solid lines begin in one fiber
and go away to infinity.  The dotted lines do not intersect either fiber.
Note that no horizontal lift of $c$ reaches from over $p$ to over $q$.}
\label{hlift}
\end{center}
\end{figure}

\begin{definition}
A general connection $\H$ is \emph{uniformly vertically bounded} (UVB) if and
only if $\H_v$ is bounded away from $\V_v$ in each $T_vE$, uniformly
along the fibers of $E$.
\end{definition}
Assuming that $\H_v$ is bounded away from $\V_v$ in each $T_v E$,
uniformly along the fibers of $E$, removes this obstacle by allowing the
standard argument to go through, preventing horizontal lifts from running
``off the edge'' \cite{Lw} or ``away to infinity'' \cite{DFN}.  All linear
connections are UVB; more generally, so are all $G$-connections for a Lie
group $G$.

First we need a way to determine how far away the $n$-plane
$\H_v$ is from the $k$-plane $\V_v$ in a single fiber $T_vE \iso \R^{n+k}$.
Wong \cite{W} showed that, given any positive-definite inner product on
$\R^{n+k}$, this can be done using $m = \min\{n,k\}$ nonzero
\emph{Wong angles}.

To compare Wong angles in different fibers of $TE$ along $E_p$, let $U
\subseteq M$ be a trivializing chart for both $E$ and $TM$ centered at $p \in
M$.  Let $g_F$ be an auxiliary Riemannian metric on the model fiber $F$ of $E$,
and $g_U$ the standard Euclidean metric on $U$.  Form the product metric
$g := g_U \times g_F$ on $E_U \iso U \times F$, and let $\del{}^g$ denote the
Levi-Civita connection for $g$.  It is well known that $\del{}^g$ determines a
system of parallel transport $\pllt^g$ on $E_U$; in particular, along $E_p$.

Fix $v \in E_p$ and let $\a :I \to E_p$ be a path with $\a(0) = w$
and $\a(1) = v$.  Denote this path by $\a :w \mapsto v$.  Let $\theta_v(w,\a)$ denote
the smallest Wong angle between the $n$-plane $\pllt^g_\a(\H_w)$ and $k$-plane
$\pllt^g_\a(\V_w)$ in the fiber $T_vE$ as measured by $g_v$.  A general connection
$\H$ is then UVB if and only if
\begin{equation}
\inf_{w \in E_p}\label{uvbinf}\left\{\inf_{\a : w\mapsto v} \theta_v(w,\a)\right\}
\geq \ve_p > 0,
\end{equation}
for all $p \in M$.

If the model fiber $F$ is parallelizable, then the situation is simplified
substantially.  In this case $T(E_U) \iso (U \times E_p) \times \R^{n+k}$, and
the Wong angles along $E_p$ can be compared unambiguously for any fixed choice
of positive-definite inner product on $\R^{n+k}$.  Note that all vector spaces and Lie
groups are parallelizable.

Now recall that linear connections satisfy $\H_{av} = a_*\H_v$ and that
$a_*$ is a motion of $T_vE$, so therefore preserves the Wong angles
along the line through 0 in $E_p$ determined by $v$ \cite{W,BN}.  Hence
the Wong angles are constant along each fiber $E_p$ so there is an absolute
minimum value among them, say $\theta_m > 0$, and this is uniform along the
fiber $E_p$.

It is easy to see that the Wong angles being constant along each fiber of a
vector bundle $E$, or \emph{vertically constant}, characterizes linear
connections. We state this explicitly since it does not seem to have been 
recorded previously.
\begin{theorem}
A\label{cwa} connection $\H$ on a vector bundle $E$ is linear if and only 
if its Wong angles are vertically constant.\eop
\end{theorem}

That $G$-connections are UVB will not be used here; a proof may be
inferred from \cite[Sec.\,31]{DFN}.

\section{\heads Main Theorem}

\begin{theorem}
Let\label{hplfb} $\pi: E \surj M$ be a smooth fiber bundle, $\H$ a general connection
on $E$, and $\g :I \to M$ a path with $\g(0) = p$ and $\g(1)=q$. For every $v
\in E_p$, there exists a unique horizontal lift $\pbar{\g} :I \to E$ such that
$\pbar{\g}(0)=v$ and $\pbar{\g}(1) \in E_q$ if and only if $\H$ is UVB.
\end{theorem}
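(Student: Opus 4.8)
The plan is to prove the two implications separately, fixing once and for all a \emph{complete} auxiliary Riemannian metric $g$ on $\tm$; such a $g$ always exists and may be used to measure the principal angles $\theta_i$ as in Section 1.

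\emph{Sufficiency} ($\H$ UVB $\Rightarrow$ HPL).\enspace I would run the construction recalled above, carried out directly in $\tm$: let $\bg$ be the maximal horizontal lift of $\g$ with $\bg(0)=v$, so that $\pi\circ\bg=\g$, $\bg'(t)\in\H_{\bg(t)}$ and $\pi_*\bg'(t)=\dg(t)$, defined on some $[0,b)$ with $b\le1$. Everything hinges on showing $b=1$ with $\bg$ extending to $t=1$, after which $\bg(1)\in T_qM$ is automatic since $\pi\circ\bg=\g$ and $\g(1)=q$. The key estimate is that UVB bounds the $g$-speed of $\bg$: since $\ker\pi_*=\V_{\bg(t)}$ and $\pi_*$ restricts to an isomorphism $\H_{\bg(t)}\to T_{\g(t)}M$, its smallest singular value is bounded below by $\sin\theta_m$ times a positive constant depending only on $g$ and $\pi$ over the compact image of $\g$. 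As $\bg'(t)$ is the unique horizontal vector with $\pi_*\bg'(t)=\dg(t)$, this yields
$$
\norm{\bg'(t)}_g\ \le\ \frac{C\,\norm{\dg(t)}_g}{\sin\theta_m}\,,
$$
a bound uniform in $t$ because $\theta_m>0$ is uniform along the fiber and $\norm{\dg}_g$ is bounded on the compact interval $I$.

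A uniform speed bound makes $\bg\big([0,b)\big)$ of finite $g$-length, hence contained in a bounded subset of $\tm$ whose closure is compact by completeness of $g$ (Hopf--Rinow). Because $\bg$ then stays in a fixed compact set while $\bD$ is defined for every $t\in I$, the escape lemma forbids $b<1$; thus $b=1$, $\bg$ extends to $t=1$, and $\bg(1)\in T_qM$. Uniqueness of $\bg$ is inherited from uniqueness of integral curves.

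\emph{Necessity} ($\H$ UVB $\Leftarrow$ HPL).\enspace I would prove the contrapositive: if $\H$ is not UVB then some horizontal lift fails to reach $T_qM$. Not-UVB means that along some fiber $T_pM$ the minimal angle is not bounded below, so there is a sequence $v_k\in T_pM$ with $\theta_m(v_k)\to0$. Running the singular-value estimate in reverse, the horizontal lift of a fixed unit base vector then has $g$-norm tending to $\infty$: geometrically $\H$ becomes arbitrarily steep relative to $\V$, precisely the phenomenon in Figure \ref{hlift}. I would use this to build a path $\g$ meeting that fiber, together with an initial vector $v$, for which the lift equation --- written in a fiber chart as a first-order system $V'(t)=\P(t,V(t))$ --- has right-hand side growing faster than linearly in $\norm V$ along the steepening directions; taking $v$ large enough in such a direction forces $\norm{V(t)}\to\infty$ at some $t_0\le1$. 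The lift cannot then be continued to $t=1$, so for this $v$ no horizontal lift meets $T_qM$ and HPL fails.

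\emph{Main obstacle.}\enspace Sufficiency is routine once the speed bound is secured. The real difficulty is necessity: converting the qualitative degeneration $\theta_m(v_k)\to0$ into a quantitative, finite-parameter blow-up of the nonlinear lift equation along an explicitly constructed path. Two points are delicate: (i) arranging $\g$ so that the lift samples the steepening of $\H$ monotonically rather than transiently, and (ii) checking that the accumulated vertical growth is genuinely superlinear --- mere unboundedness of the speed does not by itself prevent extension, so one must produce an honest $\norm V^{1+\ve}$-type lower bound whose reciprocal integrates to a finite escape time. The one-dimensional model in Figure \ref{hlift} is the template to be promoted to arbitrary dimension.
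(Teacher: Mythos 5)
Your sufficiency argument is correct and is, in spirit, the paper's own: UVB yields a uniform bound on the derivative of the lift, which rules out finite-time escape, and an ODE extension theorem finishes. The paper packages this in the pullback bundle $\g^*TM \iso I\times\R^n$, where the lift is a curve $c$ whose velocity has vertical component $Dc$; UVB bounds $\norm{Dc}$, an MVT then bounds $\norm{c}$, and the fundamental existence--uniqueness theorem plus the Extension Theorem of Hirsch--Smale give $c$ on all of $I$. Your version --- working directly in $\tm$ with a complete auxiliary metric, deducing finite $g$-length, invoking Hopf--Rinow and the escape lemma --- buys the same conclusion by a slightly heavier route. One imprecision: your constant $C$ must be uniform along the \emph{non-compact} fibers of $\tm$ over $\g(I)$, not merely over the compact set $\g(I)\subset M$; for an arbitrary complete $g$ this is not automatic, so $g$ has to be chosen compatibly with the metric in which the angles $\theta_i$ are measured (a Sasaki-type metric built from a complete metric on $M$ does this). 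As written, ``depending only on $g$ and $\pi$ over the compact image of $\g$'' glosses over exactly the fiber-direction uniformity that UVB is designed to encode.

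The genuine gap is the necessity direction, and you have flagged it yourself. What you offer there is a plan --- from $\theta_m(v_k)\to 0$, build a path and an initial vector whose lift blows up before $t=1$ --- together with an admission that its two essential steps, (i) making the lift sample the degenerating angles persistently and (ii) upgrading steepness of $\H$ to a superlinear lower bound with finite escape time, are unresolved. Your own remark (ii) is precisely why the plan cannot be completed by sharper estimates alone: collapse of $\theta_m$ along a fiber does not force any individual lift to escape, because the steep region can repel or trap lifts. Concretely, in the one-dimensional model of Figure \ref{hlift}, take horizontal slopes that grow like $y^3$ but vanish at every integer height $y=n$: every lift is then trapped between consecutive integers, so lifts exist over all of $I$ for every path and every initial vector, yet the angles still collapse along each fiber. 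So the contrapositive needs a genuinely new ingredient (at the very least a careful normalization of the auxiliary metric entering UVB), not just the blow-up estimate you describe. Note that the paper's converse runs in the opposite direction --- it assumes HPL, observes that each lift $c$ is then defined on the compact interval $I$, so $\norm{c}$ is bounded and, by smoothness, so is $\norm{Dc}$ --- extracting boundedness from the lifts rather than blow-up from non-UVB; that argument is itself only a sketch, since it does not say how bounds for individual lifts become uniform along a fiber, so your instinct that this is the hard half is sound, but as submitted your proposal does not prove it.
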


\begin{proof}
Suppose that $\H$ has this horizontal path lifting property.  Then the horizontal
lifts of $\g$ foliate the pullback bundle $\g^*\pi :\g^*E \surj I$, and every leaf
of the foliation $\eF$ meets every fiber of $\g^*\pi$.  As $\g_* T(\eF) \subseteq \H$
and $\g$ is arbitrary, the connection $\H$ is UVB.

Conversely, assume that $\H$ is UVB.  Since $I = [0,1]$ is compact, then  $\im(\g)$
can be covered by finitely many simultaneously open charts, and we may assume
that $\im(\g)$ is contained in a single trivializing chart $U \subseteq M$ for both
$E$ and $TM$.

Consider the pullback bundle $\g^*E$ over $I$.  Since $I$ is contractible,
$\g^*E \iso I \times F$, and we may identify all fibers over $I$ with a single
copy of $F$.  Let $c \in \G(\g^*E)$ with $c(0) = v \in F$, and regard $c :I \to F$ as
a path in $F$.  Again, we may assume that $\im(c)$ lies in a single trivializing chart
$W \subseteq F$ for $TF$, and further has no self-intersections in $W$.

It now makes sense to talk about $c$ in local coordinates.  We write $\g_*\,c =
(\g,c)$ with velocity lift $(\g_*\,c)^\cdot = (\g, c, \dg, \dc)$.  Suppose further
that $\g_*\,c$ is horizontal: $(\g_*\,c)^\cdot(t) \in \H_{\g_*c(t)}$.  It remains
to show that $\g_*\,c$ is defined on all of $I$.

Horizontal spaces along $\g$ have local coordinates $(\g,y,\dg,f(y))$, where
$f :F \to \R^k$ measures the failure of $\H$ to be trivial.  Thus the lift $\g_*\,
c$ is horizontal if and only if $c$ satisfies the differential equation
\begin{equation}
\dc(t)\label{de} = f(c(t)).
\end{equation}
Since $\H$ is UVB, $\norm{f}$ is uniformly bounded above. An MVT \cite[p.\,366]{B}
implies that $\norm{c}$ is bounded on the part of $I$ where it exists.  The FEUT
\cite[pp.\,166f,\,169]{HS} gives the existence of $c$, and the Extension Theorem
\cite[p.\,171f\,]{HS} implies that $c$ extends over all of $I$.
\end{proof}

In case $E = TM$ we get the following corollary.
\begin{corollary}
Let\label{hpltb} $\H$ be a general connection on $TM$ and let $\g :I \to M$
be a path with $\g(0) = p$ and $\g(1) = q$.  For every $v \in T_pM$ there
exists a unique horizontal lift $\pbar{\g}$ such that $\pbar{\g}(0) = v$ and
$\pbar{\g}(1) \in T_qM$ if and only if $\H$ is UVB.\eop
\end{corollary}
This result was first proved by Parker directly, which inspired Ryan to 
extend it to fiber bundles.

We also obtain a notion of parallel transport in $E$.
\begin{corollary}
Each\label{pllt} path $\g$ in $M$ from $p$ to $q$ defines a diffeomorphism
$\pllt_\g :E_p \to E_q$ that we call \emph{parallel transport along $\g$}.  Note
that 
\begin{equation}
\pi_*\circ\label{pig} \dot{\pbar\g} = \dot{\g}\circ \pi :E \to TM
\end{equation}
as with vector fields.  If $\g$ is not injective, this must be interpreted
\emph{via} the pullback bundle $\g^*E$.
\end{corollary}

\begin{proof}
Uniqueness and smooth dependence of integral curves on initial conditions \cite{HS}
implies that the set of all horizontal lifts of $\g$ defines such a
diffeomorphism for each $\g$.  Equivalently, the set of all horizontal lifts
of $\g$ smoothly foliates $\g^* E$ over $I$ such that every leaf meets every
fiber of $\g^*\pi$.

Equation \eqref{pig} is obvious locally and can be extended \emph{via}
concatenation.
\end{proof}

Certain properties of parallel transport similar to those found in \cite[2.6, 9.1]{P} for
linear and $G$-connections can now be obtained for UVB connections
on fiber bundles. One major difference is of course that the maps $\pllt_\g$ might
not be linear.

\begin{corollary}
This parallel\label{pllt2} transport $\pllt_\g$ in $E$, along $\g :I \to 
M$ with $\g(0) = p$ and $\g(1) = q$, has the following properties.
\begin{enumerate}
\item Existence and uniqueness: for each $v \in E_p$ and each smooth $\g$
with $\g(0) = p$, there exists a unique smooth horizontal curve $\pllt_\g v$ from  
$v \in E_p$ to $E_q$. Alternatively, one may regard $\pllt_\g v$ as a
horizontal section of $E$ along $\g$.

\item Invertibility: allowing $v$ to vary over $E_p$, the resulting map 
$\pllt_\g :E_p \to E_q$ is a diffeomorphism. Its inverse is parallel 
transport along the reverse $\rev\g (t) := 
\g(1-t)$.

\item Parametrization independence: if $\alpha$ is a reparametrization of 
$\g$, then $\pllt_\alpha = \pllt_\g :E_p \to E_q$.

\item Smooth dependence on initial conditions: For every trivializing open set $U$
in $M$ and each smooth map $f :TU \to M$ with $f(p,0) = p$ for all $p \in U$, the
bundle map
$$
F :TU \times_U E_U \to E_U :(x,v) \mapsto \pllt_\g v,
$$
where $\g(t) := f(tx)$, is smooth.

\item Initial uniqueness: if $\alpha$ and $\g$ are two curves emanating from $p \in
M$ with $\dot\a(0) = \dg(0)$, then for every $v \in E_p$, $\pllt_\alpha v = \pllt_\g v$
have the same initial tangent vector; that is, $\dot{\pllt}_\a v(0) = \dot{\pllt}_\g v(0)$.
\eop
\end{enumerate}
\end{corollary}
In case $E$ is a vector bundle, Poor \cite{P} shows that these properties, suitably
modified, may be taken as axioms for parallel transport.

\end{document}